\newcommand{\dist}{\mathrm{dist}}
\newcommand{\id}{\mathrm{id}}
\newcommand{\cu}{\mathrm{Cu}}
\newcommand{\cs}{\mathrm{C}^*}
\newcommand{\aff}{\mathrm{Aff}}
\newcommand{\laff}{\mathrm{LAff}}
\newtheoremstyle{smallcaps}
    {3pt}                    
    {3pt}                    
    {\itshape}                   
    {}                           
    {\sc}                   
    {.}                          
    {.5em}                       
    {}  
\newtheoremstyle{smallcapsdef}
    {3pt}                    
    {3pt}                    
    {}                   
    {}                           
    {\sc}                   
    {.}                          
    {.5em}                       
    {}  
\theoremstyle{smallcaps}
\newtheorem {theorem}{Theorem}[section]
\newtheorem {lemma}[theorem]{Lemma}
\newtheorem {corollary}[theorem]{Corollary}
\theoremstyle {smallcapsdef}
\newtheorem {remark}[theorem]{Remark}
\numberwithin{equation}{section}
\begin{document}

\author{Bhishan Jacelon, Karen R. Strung and Andrew S. Toms}
\date{\today}
\title[Unitary orbits in simple $\mathcal{Z}$-stable $\cs$-algebras]{Unitary orbits of self-adjoint operators in simple $\mathcal{Z}$-stable $\cs$-algebras}

\address{Department of Mathematics\\Purdue University\\150 North University Street\\West Lafayette, IN 47907\\USA} 
\email{bjacelon@purdue.edu}
\address{Instytut Matematyczny\\Polskiej Akademii Nauk\\ul. \'{S}niadeckich 8\\00-656 Warszawa\\Poland} 
\email{kstrung@impan.pl}
\address{Department of Mathematics\\Purdue University\\150 North University Street\\West Lafayette, IN 47907\\USA} 
\email{atoms@purdue.edu}
\thanks{Research supported by NSF DMS-1302763}

\keywords{$\mathcal{Z}$-stable $\cs$-algebras, unitary orbits}
\subjclass[2010]{46L05, 46L35}

\maketitle

\begin{abstract}
We prove that in a simple, unital, exact, $\mathcal{Z}$-stable $\cs$-algebra of stable rank one, the distance between the unitary orbits of self-adjoint elements with connected spectrum is completely determined by spectral data. This fails without the assumption of $\mathcal{Z}$-stability.
\end{abstract}

\section{Introduction}

It is well known that the focus of the Elliott classification programme has shifted from the class of all separable nuclear $\cs$-algebras to, in light of the counterexamples \cite{Toms:2008hl} and \cite{Rordam:2003rz}, those that are sufficiently well behaved. It has subsequently become a natural pursuit to unify the various notions of `regularity' for $\cs$-algebras, most notably in the form of the Toms--Winter conjecture (see for example \cite{Winter:2012pi}). Furthermore, any meaningful notion of regularity entails not only amenability to classification but also, inevitably, tameness of internal structure. In particular, there is a persistent expectation of mimicry in regular $\cs$-algebras of the good behaviour found in the world of von Neumann factors (see especially \cite{Sato:2014aa} for an exposition of this point of view). This article further develops that theme.

One of the most fundamental questions one can ask in an operator algebra is: when are two operators unitarily equivalent? For normal operators, this problem is inextricably linked with spectral theory, and we refer to \cite{Sherman:2007aa} for a discussion of its history. The present article addresses the related problem, which has an equally storied past, of computing the distance $d_U$ between unitary orbits  in terms of spectral information. For normal matrices $a,b\in M_n$, one asks whether $d_U(a,b)$ is equal to the \emph{optimal matching distance}
\begin{equation} \label{matching}
\delta(a,b) = \min_{\sigma\in S_n} \max_{1\le i \le n} |\alpha_i - \beta_{\sigma(i)}|
\end{equation}
where $\alpha_1,\ldots,\alpha_n$ and $\beta_1,\ldots,\beta_n$ are the eigenvalues of $a$ and $b$ respectively. If $a$ and $b$ are self-adjoint then a classical result of H.~Weyl \cite{Weyl:1912aa} says that these distances do indeed agree (and moreover $\delta(a,b)$ can be measured by listing the eigenvalues in ascending order). This continues to hold for example for unitary matrices \cite{Bhatia:1984aa} but not necessarily for normal matrices \cite{Holbrook:1992aa} (although $d_U$ and $\delta$ are known in general to be Lipschitz equivalent \cite{Bhatia:1983aa}).

Redefining $\delta$ as a \emph{crude multiplicity function} described in terms of ranks of spectral projections, this was extended in \cite{Azoff:1984aa} to self-adjoint operators on infinite dimensional Hilbert space, and further in \cite{Hiai:1989aa} to self-adjoint elements in a $\sigma$-finite semifinite factor. In the latter case, one defines $\delta(a,b)$ for normal elements $a$ and $b$ to be the infimum over $r>0$ such that 
\begin{equation} \label{crude}
\tau(\chi_{U_r}(a)) \ge \tau(\chi_{U}(b)) \quad\textrm{and}\quad \tau(\chi_{U_r}(b)) \ge \tau(\chi_{U}(a))
\end{equation}
for every open subset $U\subset\mathbb{C}$. Here $\tau$ is a fixed faithful normal semifinite trace, $U_r = \{t\mid \dist(t,U)<r\}$ and   $\chi_V$ is the indicator function on the open subset $V\subset \mathbb{C}$.

The $\cs$-regularity property most relevant to this article is that of `$\mathcal{Z}$-stability', that is, tensorial absorption of the Jiang--Su algebra $\mathcal{Z}$ \cite{Jiang:1999hb}.  A simple, unital, $\mathcal{Z}$-stable $\cs$-algebra is either stably finite or purely infinite \cite{Gong:2000kq}. In the latter case, it is shown in \cite{Skoufranis:2013aa} that (modulo $K$-theory) the distance $d_U(a,b)$ between the normal operators $a$ and $b$ is simply the Hausdorff distance between their spectra. This mirrors the corresponding result obtained for type $\rm{III}$ factors in \cite{Hiai:1989aa}, and this article may therefore be regarded as a step towards completion of the analogy. The appropriate $\cs$-analogue of (\ref{crude}) is the L\'evy--Prokhorov distance $d_P$ discussed in Section~\ref{lp} below, and we do indeed obtain that $d_U(a,b)=d_P(a,b)$ for self-adjoint elements with connected spectra in the stably finite setting. (By translating by a multiple of the unit, there is no loss of generality in restricting to positive elements.)

Our strategy along the way is to exploit another notion of spectral distance, the pseudometric $d_W$ defined in \cite{Ciuperca:2008rz} in terms of Cuntz equivalence (see Section~\ref{prelim}), and to bootstrap the equality $d_U=d_W$ from matrices to a class of algebras that exhausts the tracial invariant (Section~\ref{razak}) and ultimately via classification to the class of simple $\mathcal{Z}$-stable $\cs$-algebras of stable rank one (Section~\ref{zstable}), a class for which we also obtain $d_W=d_P=d_U$ (Section~\ref{lp}).

Finally, it should be noted that the assumption of connected spectra is necessary for a purely measure theoretic calculation of $d_U$. There exist, for example, nonequivalent projections in a simple, unital, monotracial AF algebra that have the same trace (see \cite[7.6.2]{Blackadar:1998qf}). Moreover, this computation really does rely on regularity of the ambient $\cs$-algebra. See \cite[Section 4.3]{Robert:2010rz} for an example of a simple (necessarily non-$\mathcal{Z}$-stable) AH algebra where $d_U$ and $d_P$ differ: there exist full spectrum positive contractions $a,b$ that are not approximately unitarily equivalent, so $d_U(a, b)\neq 0$, but for which $d_W(a,b)=0$ (hence also $d_P(a,b)=0$).

\section{Preliminaries} \label{prelim}

Let $A$ be a $\cs$-algebra. We will denote the cone of positive elements in $A$ by $A^+$, the minimal unitisation of $A$ by $\tilde{A}$, and the group of unitaries in $\tilde{A}$ by $\mathcal{U}(\tilde{A})$.  

Given a positive element $x \in A$, let $e_s(x)=(x-s)_+$, that is, the element of $\cs(x)$ corresponding under functional calculus to $f(t)=\max\{0,t-\varepsilon\}$. Two positive elements $x, y \in A$ are Cuntz subequivalent, written $x \precsim y$ if $\|x-v_nyv_n^*\| \to 0$ for some $v_n\in A\otimes\mathcal{K}$, and denote by $\sim$ the relation that symmetrises $\precsim$. (We refer the reader to \cite{Ara:2009cs} for more information on the Cuntz semigroup $\cu(A)=(A\otimes\mathcal{K})^+/\sim$.) 

We consider the following pseudometrics on the set of positive elements of $A$:
\begin{equation}
d_U(a,b)=\inf\{\|uau^*-b\| \mid u\in\mathcal{U}(\tilde A)\}
\end{equation}
(so $d_U(a,b)=0$ if and only if $a$ and $b$ are approximately unitarily equivalent), and
\begin{equation} \label{cuntz}
d_W(a,b)=\inf\{r>0 \mid e_{t+r}(a)\precsim e_t(b) \; \textrm{and}\; e_{t+r}(b)\precsim e_t(a) \: \forall t > 0\}.
\end{equation}
It is perhaps an instructive exercise to convince oneself that (\ref{matching}) and (\ref{cuntz}) agree for positive matrices.

The following, which implies in particular that $d_U$ and $d_W$ are continuous, makes its initial appearance as \cite[Corollary 9.1]{Ciuperca:2008rz}. See \cite[Lemma 1]{Robert:2010rz} for a straightforward functional calculus proof.

\begin{lemma}\label{cts}
For $a,b\in A^+$, we have $d_W(a,b)\le d_U(a,b) \le \|a-b\|$.
\end{lemma}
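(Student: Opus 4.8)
The plan is to dispatch the right‑hand inequality $d_U(a,b)\le\|a-b\|$ immediately, by taking $u$ to be the unit of $\tilde A$, and then to concentrate on $d_W(a,b)\le d_U(a,b)$. Since $d_U(a,b)=\inf\{\|uau^*-b\|\mid u\in\mathcal{U}(\tilde A)\}$, it is enough to fix $u\in\mathcal{U}(\tilde A)$ and prove $d_W(a,b)\le s$, where $s:=\|uau^*-b\|$. Writing $c=uau^*$ and using that each $e_\alpha$ vanishes at $0$, we have $e_\alpha(c)=ue_\alpha(a)u^*\sim e_\alpha(a)$ for all $\alpha>0$, so by the definition $(\ref{cuntz})$ of $d_W$ the task reduces to showing
\[
e_{t+r}(c)\precsim e_t(b)\quad\text{and}\quad e_{t+r}(b)\precsim e_t(c)\qquad\text{for all }t>0\text{ and all }r\ge s.
\]

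The main ingredient, which I would prove by functional calculus, is the comparison lemma: \emph{if $x\le y$ are positive elements of a $\cs$-algebra, then $(x-\mu)_+\precsim(y-\mu)_+$ for every $\mu\ge0$.} Granting it, the reduction is quickly completed: from $c-b\le s\cdot1$ we get $c\le b+s$, so applying the lemma in $\tilde A$ with $\mu=t+s$ gives
\[
e_{t+s}(c)=\bigl(c-(t+s)\bigr)_+\precsim\bigl((b+s)-(t+s)\bigr)_+=(b-t)_+=e_t(b),
\]
and since $r\ge s$ forces $\bigl(c-(t+r)\bigr)_+\le\bigl(c-(t+s)\bigr)_+$, while a smaller positive element is always Cuntz‑subequivalent to a larger one, we obtain $e_{t+r}(c)\precsim e_t(b)$; interchanging $b$ and $c$ gives the companion subequivalence. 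Thus $d_W(a,b)\le s$, and taking the infimum over $u$ finishes the argument. (Here one uses that Cuntz comparison of elements of $A$ may be computed in $\tilde A$.)

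The step I expect to need the most care is the comparison lemma. The tempting shortcut — bounding $\|(x-\mu)_+-(y-\mu)_+\|$ by $\|x-y\|$ and quoting the standard perturbation fact that $\|p-q\|<\delta$ implies $(p-\delta)_+\precsim q$ — does not work, because $\lambda\mapsto(\lambda-\mu)_+$ is Lipschitz but not operator‑Lipschitz; so I would argue directly through the order relation. For $n\in\nn$, choose continuous $g_n\colon[0,\infty)\to[0,1]$ with $g_n\equiv0$ on $[0,\mu]$ and $g_n\equiv1$ on $[\mu+\tfrac1n,\infty)$; a pointwise comparison of functions on the spectrum of $x$ shows $\bigl(x-\mu-\tfrac1n\bigr)_+\le g_n(x)(x-\mu)g_n(x)$, the cut‑off $g_n$ being present precisely to annihilate the negative part of $x-\mu$. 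Then, using $x\le y$ and $y-\mu\le(y-\mu)_+$,
\[
g_n(x)(x-\mu)g_n(x)\le g_n(x)(y-\mu)g_n(x)\le g_n(x)(y-\mu)_+g_n(x)\precsim(y-\mu)_+,
\]
the final step being the standard fact that $zwz^*\precsim w$ for $w\ge0$ and $z$ in the algebra. Hence $\bigl(x-\mu-\tfrac1n\bigr)_+\precsim(y-\mu)_+$ for every $n$, and since $(p-\varepsilon)_+\precsim q$ for all $\varepsilon>0$ forces $p\precsim q$ (a standard consequence of R\o rdam's reformulation of $\precsim$; see \cite{Ara:2009cs}), letting $n\to\infty$ gives $(x-\mu)_+\precsim(y-\mu)_+$. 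The auxiliary verifications — the pointwise inequality and the bookkeeping with $t$ and $r$ — are routine.
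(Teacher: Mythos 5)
Your proof is correct, and it is essentially the ``straightforward functional calculus proof'' that the paper itself does not write out but delegates to \cite[Lemma 1]{Robert:2010rz} (the result originating as \cite[Corollary 9.1]{Ciuperca:2008rz}): the standard route there is exactly your reduction $uau^*\le b+\|uau^*-b\|\cdot 1$ followed by the comparison lemma $x\le y\Rightarrow(x-\mu)_+\precsim(y-\mu)_+$, which you prove correctly and which is rightly preferred over the (false) operator-Lipschitz shortcut. Your closing parenthetical is justified because $A$ is an ideal, hence hereditary, in $\tilde A$, so nothing further is needed.
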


The main result of \cite{Ciuperca:2008rz} is that, if $A$ has stable rank one, then
\begin{equation}
d_W(a,b)\le d_U(a,b) \le 8d_W(a,b).
\end{equation}
In particular, two $^*$-homomorphisms from $C_0(0,1]$ to $A$ are approximately unitarily equivalent if and only if they agree at the level of the Cuntz semigroup. This inequality (with the factor of $8$ improved to $4$) has been demonstrated in \cite{Robert:2010rz} for a class of $\cs$-algebras strictly larger than that of the stable rank one algebras, provided that $d_U$ is computed in the stabilisation (note that this does not make a difference in the stable rank one case).

We are interested in showing that $d_U(a,b) = d_W(a,b)$ for every $a,b\in A^+$ when $A$ is sufficiently well behaved. This is the case, for example for inductive limits $\varinjlim C(X_i)$, where the $X_i$ are compact Hausdorff spaces of topological dimension at most $2$ with $\check{H}^2(X_i)=0$ (see \cite[Proposition 5]{Robert:2010rz}).

\begin{remark} \label{dense}
By Lemma~\ref{cts}, to show that $d_U=d_W$ it suffices to find a dense subset $A'$ of $A^+$ such that $d_U(a,b)\le d_W(a,b)$ for every $a,b\in A'$.
\end{remark}

\begin{lemma}\label{perm}
The property ``$d_U=d_W$'' passes to finite direct sums and sequential inductive limits.
\end{lemma}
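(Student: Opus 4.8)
The plan is, in both cases, to reduce via Remark~\ref{dense} to checking the inequality $d_U\le d_W$ on a suitable dense subset of $A^{+}$, the reverse inequality being automatic from Lemma~\ref{cts}. Two routine observations will be used throughout. First, $d_U$ and $d_W$ are non-increasing under $^*$-homomorphisms: for a $^*$-homomorphism $\psi\colon B\to C$, the unitisation $\tilde\psi$ carries $\mathcal{U}(\tilde B)$ into $\mathcal{U}(\tilde C)$ with $\tilde\psi(u)\psi(x)\tilde\psi(u)^{*}=\psi(uxu^{*})$, giving $d_U(\psi(x),\psi(y))\le d_U(x,y)$; and $\psi\otimes\id_{\mathcal{K}}$ preserves Cuntz subequivalence and commutes with $x\mapsto e_t(x)$, giving $d_W(\psi(x),\psi(y))\le d_W(x,y)$. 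Second, scaling a unitary of a unitisation by a scalar of modulus one leaves the conjugation action unchanged and normalises the scalar part to $1$. For a finite direct sum it then suffices to treat $A=B\oplus C$: writing $a=(a_B,a_C),b=(b_B,b_C)\in A^{+}$, the coordinate projections yield $d_W(a,b)\ge\max\{d_W(a_B,b_B),d_W(a_C,b_C)\}$, while scalar-normalised near-optimal unitaries $u_B\in\mathcal{U}(\tilde B)$ and $u_C\in\mathcal{U}(\tilde C)$ glue to a unitary $(u_B,u_C)\in\mathcal{U}(\tilde A)$ with $\|(u_B,u_C)a(u_B,u_C)^{*}-b\|=\max\{\|u_Ba_Bu_B^{*}-b_B\|,\|u_Ca_Cu_C^{*}-b_C\|\}$; combining these with $d_U=d_W$ on $B$ and $C$ gives $d_U(a,b)\le d_W(a,b)$, and one iterates.

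For a sequential inductive limit $A=\varinjlim(A_n,\phi_n)$ with canonical maps $\phi_{n,\infty}\colon A_n\to A$, the dense set will be $A'=\bigcup_n\phi_{n,\infty}(A_n^{+})$; density in $A^{+}$ follows by approximating $a\in A^{+}$ by a self-adjoint element of some $\phi_{n,\infty}(A_n)$ and applying $t\mapsto\max(t,0)$, which (the spectrum being nearly non-negative) displaces the element only slightly. Fix $a=\phi_{n,\infty}(a_0)$, $b=\phi_{n,\infty}(b_0)$ in $A'$ and set $a_m=\phi_{n,m}(a_0)$, $b_m=\phi_{n,m}(b_0)$ for $m\ge n$. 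By the first paragraph and the hypothesis on each $A_m$,
\begin{equation*}
d_U(a,b)\le\inf_{m\ge n}d_U(a_m,b_m)=\inf_{m\ge n}d_W(a_m,b_m),
\end{equation*}
and since also $d_W(a,b)\le\inf_{m\ge n}d_W(a_m,b_m)$, it remains only to prove $\inf_{m\ge n}d_W(a_m,b_m)\le d_W(a,b)$.

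For this, fix $r>r'>d_W(a,b)$, so that $e_{t+r'}(a)\precsim e_t(b)$ and $e_{t+r'}(b)\precsim e_t(a)$ in $A\otimes\mathcal{K}$ for all $t>0$. A ``push-down'' argument --- approximate an implementing element of such a Cuntz subequivalence by one lifted from some $A_m\otimes\mathcal{K}$, pass far enough up the limit that the approximation survives in norm, and apply R\o rdam's lemma ($\|x-y\|<\delta\Rightarrow(x-\delta)_{+}\precsim y$) together with the identity $(e_s(z)-\delta)_{+}=e_{s+\delta}(z)$ --- shows that for every $t>0$ and $\delta>0$ there is $m(t,\delta)\ge n$ with $e_{t+r'+\delta}(a_m)\precsim e_t(b_m)$ and $e_{t+r'+\delta}(b_m)\precsim e_t(a_m)$ in $A_m\otimes\mathcal{K}$ for all $m\ge m(t,\delta)$. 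Now fix $\delta$ with $r'+2\delta<r$, choose a finite set $0<t_1<\cdots<t_N$ with $t_1<\delta$, consecutive gaps below $\delta$, and $t_N\ge\max\{\|a_0\|,\|b_0\|\}$ (beyond which $e_{t+r}(a_m)$ and $e_{t+r}(b_m)$ vanish), and let $m_0$ be the maximum of the finitely many levels $m(t_k,\delta)$. For $m\ge m_0$ and any $t>0$, picking a net point $t_k\in[t,t+\delta)$ gives
\begin{equation*}
e_{t+r}(a_m)\precsim e_{t_k+r'+\delta}(a_m)\precsim e_{t_k}(b_m)\precsim e_t(b_m),
\end{equation*}
and symmetrically with $a$ and $b$ interchanged; hence $d_W(a_m,b_m)\le r$, and letting $r\downarrow d_W(a,b)$ completes the argument. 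The only genuine obstacle is this last step: the push-down is inherently local in the cut-off parameter $t$, so one must trade a uniform enlargement of the parameter from $r'$ to $r$ for a single limit level $m_0$ valid for all $t$ at once, which is exactly what the monotonicity of $s\mapsto e_s(\cdot)$ and the eventual vanishing of $e_{t+r}(\cdot)$ for large $t$ make possible.
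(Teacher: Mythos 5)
Your proof is correct. The paper itself gives no argument here: it simply cites \cite[Lemma 5]{Robert:2010rz} for the inductive limit case and declares finite direct sums straightforward. What you have written is a complete, self-contained version of exactly what that citation encapsulates, and all the delicate points are handled properly: the identification of $\mathcal{U}(\widetilde{B\oplus C})$ inside $\mathcal{U}(\tilde B)\times\mathcal{U}(\tilde C)$ via normalising scalar parts (needed because the unitisation of a direct sum is not the direct sum of the unitisations); the density of $\bigcup_n\phi_{n,\infty}(A_n^+)$ in $A^+$ via truncation by $t\mapsto\max(t,0)$; the push-down of a single Cuntz subequivalence to a finite stage using R\o rdam's lemma and the identity $(e_s(z)-\delta)_+=e_{s+\delta}(z)$; and, most importantly, the conversion of the $t$-by-$t$ push-down into a statement uniform in $t$ by means of a finite $\delta$-net together with the monotonicity of $s\mapsto e_s(\cdot)$ and the vanishing of $e_{t+r}(\cdot)$ for large $t$. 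That last step is the only place where something could genuinely go wrong, and you have correctly identified and resolved it. The one mild redundancy is the observation $d_W(a,b)\le\inf_m d_W(a_m,b_m)$, which is not needed once you have $d_U(a,b)\le\inf_m d_W(a_m,b_m)\le d_W(a,b)$ on the dense set and invoke Remark~\ref{dense} together with Lemma~\ref{cts}.
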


\begin{proof}
See \cite[Lemma 5]{Robert:2010rz} for inductive limits. Finite direct sums are straightforward.
\end{proof}

We will first show, in Section~\ref{razak}, that certain stably projectionless algebras satisfy $d_U=d_W$. We then use classification in Section~\ref{zstable} to obtain this for simple, $\mathcal{Z}$-stable algebras for positive contractions with full spectrum.

\section{Razak blocks} \label{razak}

A \emph{Razak block} is a $\cs$-algebra of the form
\begin{equation} \label{block}
\{f \in C([0,1], M_k\otimes M_n) \mid f(0)=c\otimes 1_{n-1}, f(1)=c\otimes 1_n, c\in M_k\},
\end{equation}
and is regarded as a subalgebra of $C([0,1],M_{kn})$. Such algebras are stably projectionless and have trivial $K$-theory. Simple inductive limits of finite direct sums of Razak blocks are classified by tracial data (see \cite{Razak:2002kq} and also \cite{Robert:2010qy}), and as a consequence are UHF-stable, hence $\mathcal{Z}$-stable. (Alternatively, since such limits have nuclear dimension one, $\mathcal{Z}$-stability follows from \cite{Tikuisis:2012kx}.) 

\begin{theorem} \label{nccw}
Let $A$ be a Razak block as in (\ref{block}), and let $a,b\in A^+$. Then $d_U(a,b) = d_W(a,b)$.
\end{theorem}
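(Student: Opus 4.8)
The plan is to prove the substantive inequality $d_U(a,b)\le d_W(a,b)$ on a dense subset of $A^+$; the reverse inequality is Lemma~\ref{cts}, and by Remark~\ref{dense} this suffices. Write $N=kn$ and regard $A\subseteq C([0,1],M_N)$ as in (\ref{block}); for a self-adjoint $h\in C([0,1],M_N)$ let $\lambda_1^h(x)\le\dots\le\lambda_N^h(x)$ be the eigenvalues of $h(x)$ in increasing order, repeated with multiplicity (these depend continuously on $x$). I would take as the dense set $A'\subseteq A^+$ the elements $a$ that are constant near each of $x=0$ and $x=1$, whose core matrices $c_a(0),c_a(1)\in M_k$ — determined by $a(0)=c_a(0)\otimes1_{n-1}\oplus0_k$ and $a(1)=c_a(1)\otimes1_n$ — have distinct, strictly positive eigenvalues, and whose eigenvalue functions are, in between the constant stretches, distinct apart from finitely many transversal crossings of consecutive $\lambda_i^a$. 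Density of $A'$ is routine; the one subtlety is that the perturbations near $0$ and $1$ must respect the tensorial boundary conditions, which one arranges by first replacing $a$ by a Razak-form constant near each endpoint (interpolating over a short interval) and then perturbing the cores and the interior.

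The first step is to unpack $d_W(a,b)<r$. Each point evaluation $\ev_x\colon A\to M_N$ ($x\in[0,1]$) is a surjective $^*$-homomorphism, so it preserves $\precsim$ and commutes with functional calculus; applying it to the comparisons in (\ref{cuntz}) gives $d_W(a(x),b(x))<r$ in $M_N$ for every $x$. Since (\ref{matching}) and (\ref{cuntz}) agree for positive matrices and, by Weyl's theorem \cite{Weyl:1912aa}, the optimal matching distance of self-adjoint matrices is realised by the increasing rearrangement of their eigenvalues, this yields
\[
\sup_{x\in[0,1]}\ \max_{1\le i\le N}\ |\lambda_i^a(x)-\lambda_i^b(x)|\ <\ r .
\]

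Fix once and for all, depending only on $k$ and $n$, a continuous path $V(x)$ of unitaries in $M_N$, defined for $x$ in fixed neighbourhoods of $0$ and $1$, equal to $1$ outside them, such that conjugation by $V(0)$ (respectively $V(1)$) turns the increasingly ordered diagonal form of an eigenvalue list of Razak type into the tensorial boundary form of (\ref{block}). For $a\in A'$ set $\sigma(a)(x)=V(x)\,\diag(\lambda_1^a(x),\dots,\lambda_N^a(x))\,V(x)^*$ (with $V(x)=1$ in the interior). Because every member of $A'$ has the same eigenvalue-multiplicity pattern at each endpoint, a single $V$ works for all of them, $\sigma(a)\in A^+$, and $\sigma(a)$ depends on $a$ only through the functions $\lambda_i^a$. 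I claim $d_U(a,\sigma(a))=0$. Away from the endpoints $\sigma(a)=\diag(\lambda_1^a,\dots,\lambda_N^a)$, and the point is the standard fact that a self-adjoint element of $C([\delta,1-\delta],M_N)$ with finitely many transversal eigenvalue crossings is approximately unitarily equivalent to the diagonal matrix of its increasingly ordered eigenvalues: diagonalise continuously off the crossings and patch across each, using that there the relevant $2\times2$ block is within $O(\varepsilon)$ of a scalar. Near the endpoints, where $a$ is constant, a conjugating unitary of the form forced by membership in $\mathcal{U}(\tilde A)$ — namely $d\otimes1_{n-1}\oplus\mu1_k$ at $0$ and $d'\otimes1_n$ at $1$, with $d$ diagonalising $c_a(0)$ and composed with the fixed twist $V$ — carries $a$ exactly onto $\sigma(a)$; one then glues this to the interior unitary after correcting the latter by a path in the (connected) centraliser of the relevant boundary value. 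Thus $a$, and likewise $b$, is approximately unitarily equivalent in $A$ to its standard form.

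Finally, since $\sigma(a)$ and $\sigma(b)$ are built with the same path $V$, unitary invariance of the norm gives $\|\sigma(a)-\sigma(b)\|=\sup_x\max_i|\lambda_i^a(x)-\lambda_i^b(x)|<r$, and hence, by Lemma~\ref{cts} and the triangle inequality for the pseudometric $d_U$,
\[
d_U(a,b)\ \le\ d_U(a,\sigma(a))+\|\sigma(a)-\sigma(b)\|+d_U(\sigma(b),b)\ <\ r .
\]
Letting $r$ decrease to $d_W(a,b)$ gives $d_U(a,b)\le d_W(a,b)$ on $A'$, and Remark~\ref{dense} finishes the proof. I expect the main obstacle to be establishing $d_U(a,\sigma(a))=0$ within $A$: running the interior approximate diagonalisation through the eigenvalue crossings while keeping the conjugating unitary compatible at $0$ and $1$ with the tensorial boundary conditions defining the Razak block, and gluing the boundary and interior pieces together without degrading the norm estimates.
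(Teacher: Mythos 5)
Your overall strategy is essentially the one the paper uses: restrict to a dense set of elements constant near the endpoints, convert $d_W(a,b)<r$ into the pointwise estimate $\sup_x\max_i|\lambda_i^a(x)-\lambda_i^b(x)|<r$ via evaluation and Weyl's theorem, approximately diagonalise in the interior with eigenvalues in increasing order, and splice this onto explicit block-structured unitaries at the endpoints. The paper obtains the interior diagonalisation from Thomsen's results (\cite[Lemma 1.1 and Corollary 1.3]{Thomsen:1992qf}), which require no genericity, and builds the conjugating unitary directly as $w=v^*u$ rather than passing through a standard form $\sigma(a)$ and the triangle inequality; your added genericity hypotheses (distinct core eigenvalues, transversal crossings) buy you a hands-on diagonalisation and exact centralisers for the gluing, where the paper instead glues with Lin's almost-commuting-unitary lemma \cite[Lemma 2.6.11]{Lin:2001it}. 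These are differences of execution rather than of strategy.

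Two points in your sketch need repair. First, membership of $w$ in $\mathcal{U}(\tilde A)$ forces not merely the shapes $d\otimes 1_{n-1}\oplus\mu 1_k$ at $x=0$ and $d'\otimes 1_n$ at $x=1$, but also $d'=d$ (with $\mu$ the scalar part of the unitisation), since $w-\mu 1$ must lie in $A$ and the boundary conditions in (\ref{block}) use the \emph{same} $c\in M_k$ at both ends. As written, with $d$ diagonalising $c_a(0)$ and $d'$ left unconstrained, the glued unitary need not lie in $\tilde A$ and the argument fails at its most delicate point --- this matching of the two endpoint cores is the entire content of the paper's equations (\ref{t0}) and (\ref{t1}). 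The repair is available precisely because the Razak conditions force $c_a(0)=c_a(1)$, so you may, and must, use the same increasing-order diagonaliser of the common core at both ends. Second, your patching device (a $2\times 2$ block within $O(\varepsilon)$ of a scalar at a transversal crossing of consecutive eigenvalues) does not cover the junctions $x=\gamma$ and $x=1-\gamma$, where clusters of $n-1$ (respectively $k$) eigenvalue functions merge into the degenerate constant spectrum and the interior eigenprojections need not extend continuously; there you must either patch clusterwise by the same $\varepsilon$-argument (the cluster spectral projections do converge, and each cluster is $\varepsilon$-scalar) or simply invoke Thomsen as the paper does. With these repairs the proof goes through.
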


\begin{proof}
The proof is an adaptation of \cite[Theorem 4.1.6]{Cheong:2013} (see also \cite{Cheong:2013aa}). By Remark~\ref{dense}, we may assume that, for some $\gamma>0$, both $a$ and $b$ are constant on $[0,\gamma]$ and $[1-\gamma,1]$. Moreover, it suffices to show that, if $d_W(a,b) <r$ and $\varepsilon>0$, then there is a unitary $w\in \mathcal{U}(\tilde A)$ such that
\[
\|w_ta_tw_t^*-b_t\|<r+\varepsilon \quad \textrm{for every } t\in[0,1].
\]

So let us take such $r$ and $\varepsilon$. Let $\delta\in(0,\varepsilon/2)$. For every $t \in [0,1]$, we have $d_W(a_t,b_t) \le d_W(a,b) <r$, which means that the eigenvalues of $a_t$ and $b_t$, listed in ascending order, are paired within distance $r$ of each other. We will use this to construct $w$ on $[0,\gamma]$, $[\gamma,1-\gamma]$ and $[1-\gamma,1]$.\\

\noindent\underline{On $[\gamma, 1-\gamma]$:} Let $D^a_t$ (respectively $D^b_t$) be the diagonal matrix whose diagonal entries are the eigenvalues of $a_t$ (respectively $b_t$) in ascending order. By \cite[Lemma 1.1]{Thomsen:1992qf} and \cite[Corollary 1.3]{Thomsen:1992qf}, $D^a$ and $D^b$ are continuous, and there exist unitaries $u,v\in C([\gamma,1-\gamma], M_{kn})$ such that for every $t\in[\gamma,1-\gamma]$,
\begin{equation}
\|u_ta_tu_t^* - D^a_t\|<\delta \quad\textrm{and}\quad \|v_tb_tv_t^* - D^b_t\|<\delta.
\end{equation}
Define
\begin{equation}
w(t)=v_t^*u_t \quad\textrm{for } t\in[\gamma,1-\gamma]. 
\end{equation}
Then for such $t$ we have
\[
\|w_ta_tw_t^*-b_t\| \le \|D^a_t-D^b_t\| +2\delta <r+\varepsilon.
\]
\underline{On $[0,\gamma]$:} Choose unitaries $U,V\in M_k$ such that, with $u_0=(U\otimes1_{n-1})\oplus(1_k\otimes e_{nn})$ and $v_0=(V\otimes1_{n-1})\oplus(1_k\otimes e_{nn})$, each $M_k$ block of $u_0a_0u_0^*$ and $v_0b_0v_0^*$ is diagonal with the eigenvalues in ascending order. In particular,
\begin{equation}
\|u_0a_0u_0^* - v_0b_0v_0^*\| < r.
\end{equation}
Choose a permutation matrix $x$ such that $xu_0a_0u_0^*x^*$ and $xv_0b_0v_0^*x^*$ are diagonal matrices whose entries appear in ascending order (the same $x$ works for both matrices). That is, since $a_0=a_\gamma$,
\begin{equation}
(xu_0)a_0(xu_0)^* = D^a_\gamma \quad\textrm{and}\quad (xv_0)b_0(xv_0)^* = D^b_\gamma.
\end{equation}
Then
\begin{equation}
\|[(xu_0)^*u_\gamma, a_0]\| = \|u_\gamma a_0 u_\gamma^*- (xu_0)a_0(xu_0)^*\| = \|u_\gamma a_\gamma u_\gamma^* - D^a_\gamma\| < \delta,
\end{equation}
and similarly $\|[(xv_0)^*v_\gamma, b_0]\| < \delta$. By \cite[Lemma 2.6.11]{Lin:2001it}, which is an exercise in functional calculus, provided we chose $\delta$ sufficiently small to begin with, there are therefore paths $f,g\in C([0,\gamma],\mathcal{U}_{kn})$ from $1$ to $(xu_0)^*u_\gamma$ and $(xv_0)^*v_\gamma$ respectively, such that
\begin{equation}
\|[f_t,a_0]\|<\epsilon/2 \quad\textrm{and}\quad \|[g_t,b_0]\|<\epsilon/2 \quad\textrm{for every } t\in[0,\gamma].
\end{equation}
Define
\begin{equation}
w_t=g_t^*(xv_0)^*(xu_0)f_t \quad\textrm{for } t\in[0,\gamma].
\end{equation}
Note that $g_\gamma^*(xv_0)^*(xu_0)f_\gamma = v_\gamma^*u_\gamma$, so now $w$ is well defined and continuous on $[0,1-\gamma]$. Moreover,
\begin{equation}\label{t0}
w_0=v_0^*u_0 =(V^*U\otimes 1_{n-1})\oplus(1_k\otimes e_{nn}).
\end{equation}
Finally, for $t\in[0,\gamma]$ we have
\begin{align*}
\|w_ta_tw_t^* - b_t\| &= \|(xu_0)(f_ta_tf_t^*)(xu_0)^* - (xv_0)(g_tb_tg_t^*)(xv_0)^*\|\\
&\le \|(xu_0)a_t(xu_0)^* - (xv_0)b_t(xv_0)^*\| + \varepsilon\\
&=\|u_0a_0u_0^* - v_0b_0v_0^*\| + \varepsilon\\
&< r+\varepsilon.
\end{align*}
\underline{On $[1-\gamma,1]$:} By exactly the same argument, we extend $w$ continuously to $[1-\gamma,1]$ with $\|w_ta_tw_t^*-b_t\|<r+\varepsilon$ for every $t$ and with
\begin{equation}\label{t1}
w_1=V^*U\otimes1_n.
\end{equation}
(This is because $U\otimes1_n$ and $V\otimes1_n$ play the diagonalising roles at $t=1$ that $u_0=(U\otimes1_{n-1})\oplus(1_k\otimes e_{nn})$ and $v_0=(V\otimes1_{n-1})\oplus(1_k\otimes e_{nn})$ did at $t=0$. A different permutation matrix $x$ may be needed but this does not matter.) Comparing (\ref{t0}) and (\ref{t1}) we see that the unitary $w$ we have constructed is indeed in $\tilde A$ (because $w-1\in A$), so we are done.
\end{proof}

\begin{corollary}\label{cor}
If $A$ is a sequential inductive limit of finite direct sums of Razak blocks, then $d_U(a,b)=d_W(a,b)$ for every $a,b\in A^+$.
\end{corollary}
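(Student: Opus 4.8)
The plan is straightforward: combine Theorem~\ref{nccw} with Lemma~\ref{perm}. Theorem~\ref{nccw} establishes that every single Razak block satisfies the property ``$d_U=d_W$'' on its positive cone. Lemma~\ref{perm} asserts that this property is stable under finite direct sums and sequential inductive limits. Since by hypothesis $A = \varinjlim A_i$ where each $A_i$ is a finite direct sum of Razak blocks, we obtain the conclusion in two applications of Lemma~\ref{perm}.

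Concretely, first I would fix $i$ and write $A_i = B_1\oplus\cdots\oplus B_{m_i}$ with each $B_j$ a Razak block. By Theorem~\ref{nccw}, each $B_j$ satisfies $d_U=d_W$, so by the ``finite direct sums'' half of Lemma~\ref{perm}, $A_i$ satisfies $d_U=d_W$ on $A_i^+$. Then, running $i$ over $\nn$ and invoking the ``sequential inductive limits'' half of Lemma~\ref{perm} for the system $A_1\to A_2\to\cdots$ with limit $A$, we conclude that $d_U(a,b)=d_W(a,b)$ for every $a,b\in A^+$.

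There is no real obstacle here — the corollary is a bookkeeping consequence of the two cited results. The only point meriting a word of care is that Lemma~\ref{perm} is stated for the abstract property ``$d_U=d_W$'' and genuinely requires a \emph{sequential} limit (as reflected in the reference to \cite[Lemma 5]{Robert:2010rz}), which is exactly the hypothesis we are given; and that the connecting maps in such a limit need not be injective, but Lemma~\ref{perm} is insensitive to that. Thus the proof is essentially one line.

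\begin{proof}
Write $A = \varinjlim A_i$ with each $A_i$ a finite direct sum of Razak blocks. By Theorem~\ref{nccw}, each Razak block satisfies ``$d_U=d_W$'', so by Lemma~\ref{perm} each $A_i$ does too, and applying Lemma~\ref{perm} once more to the sequential inductive system $(A_i)$ shows that $A$ satisfies ``$d_U=d_W$'', i.e.\ $d_U(a,b)=d_W(a,b)$ for all $a,b\in A^+$.
\end{proof}
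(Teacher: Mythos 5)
Your proposal is correct and is exactly the argument the paper intends: the corollary follows by combining Theorem~\ref{nccw} with the two permanence properties of Lemma~\ref{perm}. Nothing further is needed.
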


\begin{remark}\label{cheong}
In the unital case Theorem~\ref{nccw} holds for more general type I $\cs$-algebras. In particular, Cheong has shown in \cite{Cheong:2013aa}  that it is true for any one-dimensional NCCW complex. 
\end{remark}

\section{Simple, $\mathcal{Z}$-stable $\cs$-algebras} \label{zstable}

The proof of our main theorem, that $d_U$ and $d_W$ agree for full spectrum positive contractions in simple $\mathcal{Z}$-stable $\cs$-algebras, relies on the structure of the Cuntz semigroup of these algebras and on the classification established in \cite{Robert:2010qy}. A preliminary discussion seems warranted.

Building on \cite{Brown:2008mz} and \cite{Brown:2007rz}, it is shown in \cite{Elliott:2009kq} that if $A$ is simple, stably finite, $\mathcal{Z}$-stable and exact, then
\begin{equation}\label{cu}
\cu(A) \cong V(A)\backslash\{0\} \sqcup \laff(T(A)),
\end{equation}
where $V(A)$ is the Murray--von Neumann semigroup of projections over $A$, $T(A)$ is the cone of densely finite lower semicontinuous traces on $A$, and $\laff(T(A))$ denotes the union of the zero functional and those lower semicontinuous linear functionals that are suprema of increasing sequences of elements in
\begin{displaymath}
\aff(T(A))=\{f:T(A)\to\mathbb{R}^+ \mid \textrm{$f$ linear, continuous}, f|_{T(A)\backslash\{0\}}>0\}.
\end{displaymath}
(The isomorphism is obtained by sending $[a]$ to $[p]\in V(A)$ if $a$ is Cuntz equivalent to a nonzero projection $p$, and otherwise to the functional $\widehat{[a]}$ on $T(A)$ defined by $\widehat{[a]}(\tau) = d_\tau(a) = \lim_{n\to\infty}\tau(a^{1/n})$. Note that if $e\in A$ is strictly positive, then $\widehat{[e]}(\tau)=\|\tau\|$ for every $\tau\in T(A)$.)

The main theorem of \cite{Robert:2010qy} is that the finer invariant $\cu^\sim$ classifies $^*$-homomorphisms from inductive limits of one-dimensional NCCW complexes with trivial $K_1$ (a class of $\cs$-algebras that includes in particular those considered in Corollary~\ref{cor}) to stable rank one $\cs$-algebras. We refer the reader to \cite{Robert:2010qy} for the definition but note in particular that (in the stable rank one case), $\cu^\sim(A)$ contains $\cu(A)$ as its positive cone. If moreover $A$ is simple and $\mathcal{Z}$-stable then it follows from (\ref{cu}) that
\begin{equation} \label{cusim}
\cu^\sim(A) \cong K_0(A)\backslash\{0\} \sqcup \laff^\sim(T(A))
\end{equation}
where
\begin{equation}
\laff^\sim(T(A))= \laff(T(A)) - \aff(T(A))
\end{equation}
(see \cite[Section 6]{Robert:2010qy} for details).

\begin{theorem} \label{main}
Let $A$ be simple, separable, $\mathcal{Z}$-stable, exact and of stable rank one.  Then $d_W(a,b) = d_U(a,b)$ for every  $a,b\in A^+$ with $\sigma(a)=\sigma(b)=[0,1]$.
\end{theorem}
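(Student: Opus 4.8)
The plan is to reduce the general case to the class of algebras already handled in Corollary~\ref{cor} by an approximation-and-embedding argument driven by the classification of Cuntz semigroup morphisms from one-dimensional NCCW complexes. First I would use Remark~\ref{dense}: it suffices to verify $d_U(a,b) \le d_W(a,b)$ for $a,b$ ranging over a dense subset of those positive elements with full spectrum $[0,1]$. For such $a$ and $b$, both are images of positive contractions generating $^*$-homomorphisms $\varphi_a, \varphi_b \colon C_0(0,1] \to A$, and the Cuntz-semigroup data of these maps is encoded by the function $t \mapsto d_\tau(e_t(a))$ on $T(A)$ (together with trivial $K_1$ information, since $C_0(0,1]$ is contractible). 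The starting point is that, because $A$ is simple, separable, $\mathcal Z$-stable, exact and of stable rank one, formula~(\ref{cu}) identifies $\cu(A)$, and (\ref{cusim}) identifies $\cu^\sim(A)$, purely in terms of $K_0(A)$ and $\laff^\sim(T(A))$; the relevant part for $C_0(0,1]$-homomorphisms lives entirely in the $\laff^\sim$-summand.

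Next I would construct a suitable model algebra. By the range-of-the-invariant results behind \cite{Razak:2002kq} and \cite{Robert:2010qy}, there is a simple inductive limit $B$ of finite direct sums of Razak blocks whose trace cone realizes (an appropriate approximation of) the tracial data of $A$ — concretely, one wants a unital, or at least suitably large corner, compatibility of $T(B)$ with $T(A)$ so that the germs $\widehat{[e_t(a)]}$ and $\widehat{[e_t(b)]}$ are matched by elements of $\cu^\sim(B)$. Here one exploits that $B$ exhausts the tracial invariant among stably projectionless classifiable algebras. Using the classification theorem of \cite{Robert:2010qy} — that $\cu^\sim$ classifies $^*$-homomorphisms from one-dimensional NCCW complexes with trivial $K_1$ into stable rank one algebras — I would produce $^*$-homomorphisms $\psi_a, \psi_b \colon C_0(0,1] \to B$ with $\cu^\sim(\psi_a) = \cu^\sim(\varphi_a)$ and likewise for $b$, and conversely a $^*$-homomorphism $\Phi \colon B \to A$ (after stabilizing, or passing to a hereditary subalgebra and using stable rank one to transfer back) with $\cu^\sim(\Phi \circ \psi_a)$ matching $\cu^\sim(\varphi_a)$. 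Then $\varphi_a$ and $\Phi \circ \psi_a$ are approximately unitarily equivalent in $A$, and similarly for $b$; in particular $a' := \Phi(\psi_a(\iota))$ is approximately unitarily equivalent to $a$ (where $\iota$ is the generator of $C_0(0,1]$), and $b'$ to $b$, with moreover $d_W(a',b') = d_W(a,b)$ since $d_W$ depends only on the Cuntz class.

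Finally, I would apply Corollary~\ref{cor} inside $B$: since $B$ is a sequential inductive limit of finite direct sums of Razak blocks, $d_U^B(\psi_a(\iota), \psi_b(\iota)) = d_W^B(\psi_a(\iota), \psi_b(\iota)) = d_W^A(a,b)$. Pushing the near-optimal unitary through $\Phi$ (and composing with the unitaries from the approximate equivalences above), one gets $d_U^A(a',b') \le d_W^A(a,b) + \varepsilon$, and then $d_U^A(a,b) \le d_U^A(a',b') + 2\varepsilon$ by the triangle inequality together with $d_U(a,a') = d_U(b,b') = 0$. Letting $\varepsilon \to 0$ and combining with Lemma~\ref{cts} gives $d_U(a,b) = d_W(a,b)$ on the dense set, hence everywhere. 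The main obstacle I anticipate is the bookkeeping around traces and the stabilization: ensuring that a classifiable Razak-block limit $B$ with a compatible trace simplex genuinely exists and that the morphism $\Phi \colon B \to A$ can be arranged to have the prescribed Cuntz behavior on the germs $e_t$ — this is where simplicity, $\mathcal Z$-stability and stable rank one of $A$ must all be used in concert, via (\ref{cu}) and (\ref{cusim}), to see that the $\laff^\sim$-data arising from the full-spectrum elements of $A$ is actually in the range of $\cu^\sim$ applied to maps factoring through $B$. The connectedness hypothesis $\sigma(a) = \sigma(b) = [0,1]$ enters precisely to guarantee that $\varphi_a, \varphi_b$ land in the projectionless part of the Cuntz semigroup, so that the $K_0$-summand of (\ref{cusim}) plays no role and the purely tracial comparison suffices.
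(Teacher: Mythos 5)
Your proposal follows essentially the same route as the paper: construct a simple inductive limit $B$ of finite direct sums of Razak blocks realizing the tracial data of $A$ (the paper uses \cite[Proposition 5.3]{Jacelon:2010fj}), embed it into $A$ via the classification theorem of \cite{Robert:2010qy}, pull $a$ and $b$ back to Cuntz-equivalent elements $a',b'\in B$ via \cite{Ciuperca:2011wd}, and transfer the equality $d_U=d_W$ of Corollary~\ref{cor} through the embedding using stable rank one. The points you flag as delicate --- the existence of a trace-compatible $B$ and the matching of the $\laff^\sim$-data --- are handled exactly as you anticipate, and your explanation of the role of the connected-spectrum hypothesis matches the paper's.
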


\begin{proof}
The argument is essentially the same as is laid out in \cite[Section 3.4]{Strung:2013aa} (except that we do not have to worry about $K$-theory). By (\ref{cu}) and (\ref{cusim}), we have
\begin{equation}
\cu(A) \cong V(A)\backslash\{0\} \sqcup \laff(T(A))\:,\:\cu^\sim(A) \cong K_0(A)\backslash\{0\} \sqcup \laff^\sim(T(A)).
\end{equation}
Let $s_A\in A$ be strictly positive. By \cite[Proposition 5.3]{Jacelon:2010fj}, there exists a simple inductive limit $B$ of finite direct sums of Razak blocks together with a strictly positive element $s_B\in B$ and an isomorphism $\gamma:T(A)\to T(B)$ under which $\widehat{[s_A]}$ corresponds to $\widehat{[s_B]}$. (Essentially, $B$ is the tensor product of the $\cs$-algebra $\mathcal{W}$, a simple monotracial inductive limit of Razak blocks, with an appropriate AF-algebra.) Since $B$ is $\mathcal{Z}$-stable and $K_0(B)=0$ (see Section~\ref{razak}), we have
\begin{equation}
\cu(B) = \laff(T(B))\:,\:\cu^\sim(B) = \laff^\sim(T(B)).
\end{equation}
The obvious morphism
\begin{equation}
\theta: \cu^\sim(B) \to \laff^\sim(T(A)) \subset \cu^\sim(A),
\end{equation}
that maps $\cu(B)$ onto $\laff(T(A))$ via
\begin{equation} \label{theta}
\theta([b])(\tau) = \hat{[b]}(\gamma(\tau)) = d_{\gamma(\tau)}(b) \quad (\tau\in T(A)),
\end{equation}
satisfies $\theta([s_B])=\widehat{[s_A]}\le [s_A]$ and therefore, by \cite[Theorem 1.0.1]{Robert:2010qy}, lifts to an embedding $\iota: B \to A$ with $\cu^\sim(\iota)=\theta$.

Next, since no nonzero element of $\cs(a)$ is Cuntz equivalent to a projection, the inclusion $\cs(a)\hookrightarrow A$ induces a morphism
\[
\cu(\cs(a)) \to \laff(T(A)) \subset \cu(A),
\]
hence a morphism
\begin{equation}
\varphi:\cu(C_0(0,1]) \to \cu(B),
\end{equation}
namely
\begin{equation}
\varphi([f])(\tau) = d_{\gamma^{-1}(\tau)}(f(a)) \quad (f\in C_0(0,1]^+, \tau\in T(B)),
\end{equation}
that moreover satisfies $\varphi([\id])\le[s_B]$. By \cite[Theorem 1]{Ciuperca:2011wd} we can lift this to a $^*$-homomorphism $C_0(0,1] \to B$. In other words, we can find a positive contraction $a'\in B$ such that
\begin{equation} \label{ces}
d_{\gamma(\tau)}(f(a')) = d_\tau(f(a)) \quad (f\in C_0(0,1]^+, \tau\in T(A)).
\end{equation}
Then for every $f\in C_0(0,1]^+$ we have (in $\cu(A)$)
\begin{equation}
[\iota(f(a'))] = \theta([f(a')]) = [f(a)]
\end{equation}
by (\ref{theta}) and (\ref{ces}).

Therefore, regarding $B$ as a subalgebra of $A$, we have $d_W(a,a')=0$. Since $A$ has stable rank one, we have $d_U(a,a')=0$ as well. Similarly we find a positive contraction $b'\in B$ with $d_U(b,b')=0$. Then, specifying the algebra in which the relevant distance should be measured,
\begin{align*}
d^A_W(a,b) &= d^A_W(a',b')\\
&= d^B_W(a',b') \quad(\ref{theta})\\
&= d^B_U(a',b') \quad\textrm{(Corollary~\ref{cor})}\\
&\ge d^A_U(a',b') \quad(\mathcal{U}(\tilde B)\subset \mathcal{U}(\tilde A))\\
&= d^A_U(a,b),
\end{align*}
hence (see Remark~\ref{dense}), $d_W(a,b)=d_U(a,b)$.
\end{proof}

\begin{remark} We note here that the hypotheses of Theorem~\ref{main} can be relaxed.
\begin{enumerate}[(i)]
\item A simple, unital, $\mathcal{Z}$-stable $\cs$-algebra either has stable rank one or is purely infinite (\cite{Gong:2000kq} and \cite[Theorem 6.7]{Rordam:2004kq}). If $A$ is unital and purely infinite, then $d_U(a,b)=0$ by \cite[Lemma 2.11]{Skoufranis:2013aa} (or indeed from the very general \cite[Theorem 1.7]{Dadarlat:1995uq}) and $d_W(a,b)=0$ more or less by definition. So, at least in the unital case, the assumption of stable rank one can be dropped. (A suitable generalisation of \cite[Theorem 6.7]{Rordam:2004kq}, which says that stable rank one for simple, unital, stably finite, $\mathcal{Z}$-stable algebras is automatic, is not known for  stably projectionless algebras.)

\item By considering $2$-quasitraces rather than traces, Theorem~\ref{main} is probably true without the assumption of exactness.
\item Recall that a simple, separable $\cs$-algebra is \emph{pure} if its Cuntz semigroup has strict comparison and almost divisibility (see \cite[Definition 3.6]{Winter:2012pi}). In particular, simple, separable, $\mathcal{Z}$-stable algebras are pure (see \cite{Rordam:2004kq} and also \cite[Proposition 3.7]{Winter:2012pi}, which is stated for unital algebras but does not use this assumption). The computations (\ref{cu}) and (\ref{cusim}) hold not just for the class of (simple, separable) $\mathcal{Z}$-stable algebras, but for the strictly larger class of pure $\cs$-algebras that have stable rank one (see \cite[Section 4]{Ng:2014aa} for details, and the references therein). So Theorem~\ref{main} holds for pure, stable rank one $\cs$-algebras as well.
\end{enumerate}
\end{remark}

\section{The L\'evy--Prokhorov distance} \label{lp}

In this section we provide the $\cs$-analogue of \cite[Theorem 1.3]{Azoff:1984aa}, namely, we compute the distance between the unitary orbits of self-adjoint elements  in terms of spectral data. We first need a $\cs$-version of (\ref{crude}).

Let $A$ be a unital, stably finite, exact $\cs$-algebra. For positive contractions $a,b\in A$, define $d_P(a,b)$ to be the infimum over $r>0$ such that
\begin{equation}
\mu_{\tau,a}(U_r) \ge \mu_{\tau,b}(U) \quad \textrm{and}\quad \mu_{\tau,b}(U_r) \ge \mu_{\tau,a}(U)
\end{equation}
for every open subset $U\subset (0,1]$ and every trace $\tau\in T(A)$ (where $\mu_{\tau,a}$ and $\mu_{\tau,b}$ denote the Borel measures induced by $\tau$ on $\cs(a)$ and $\cs(b)$ respectively, and $U_r = \{t\mid \dist(t,U)<r\}$).

Note that if $U\subset(0,1]$ is open and the support of $f\in C_0(0,1]^+$ is $U$, then $\mu_{\tau,a}(U)=d_\tau(f(a))$. In particular, $d_P(a,b)=0$ if and only if $d_\tau(f(a)) = d_\tau(f(b))$ for every $\tau\in T(A)$ and $f\in C_0(0,1]^+$.

The following is contained in Lemmas 1 and 2 of \cite{Cheong:2013aa}.

\begin{lemma} \label{levy}
Let $A$ be unital, simple, exact and stably finite, and suppose moreover that $A$ has strict comparison (for example, $A$ might be $\mathcal{Z}$-stable). Then for every $a,b\in A^+$ with $\sigma(a)=\sigma(b)=[0,1]$, we have
\begin{equation}
d_W(a,b)\le d_P(a,b) \le d_U(a,b).
\end{equation}
\end{lemma}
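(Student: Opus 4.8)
The plan is to establish the two inequalities $d_W(a,b) \le d_P(a,b)$ and $d_P(a,b) \le d_U(a,b)$ separately, in each case translating the measure-theoretic data in the definition of $d_P$ into the language of Cuntz comparison, where strict comparison does the real work. For the first inequality, fix $r > 0$ with $\mu_{\tau,a}(U_r) \ge \mu_{\tau,b}(U)$ and $\mu_{\tau,b}(U_r) \ge \mu_{\tau,a}(U)$ for all open $U \subset (0,1]$ and all $\tau \in T(A)$. I want to show $d_W(a,b) \le r$, i.e.\ $e_{t+r}(a) \precsim e_t(b)$ and the symmetric statement for every $t > 0$. The key observation is the remark in the excerpt: if $U = \supp(f)$ for $f \in C_0(0,1]^+$, then $\mu_{\tau,a}(U) = d_\tau(f(a))$. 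Taking $f$ to be (a function supported exactly on) the open set $(t, 1]$, one has $\supp(f) = (t,1]$ and $(t,1]_r \supseteq (t+r, 1]$ essentially (up to the endpoint, which carries no measure since $\sigma(a) = [0,1]$ and one can perturb slightly), so $d_\tau(e_{t+r}(a)) \le \mu_{\tau,a}((t+r,1]) \le \mu_{\tau,a}((t,1]_r) \cdot$ wait---more carefully, $d_\tau(e_{t+r}(b)) \le d_\tau(e_t(a))$ will follow from $\mu_{\tau,b}(U_r) \ge \mu_{\tau,a}(U)$ applied with $U = (t,1]$ after identifying $\mu_{\tau,a}((t,1]) = d_\tau(e_t(a))$ and bounding $\mu_{\tau,b}((t,1]_r)$ below by... no: I need an upper bound on $d_\tau(e_{t+r}(b))$, which is $\mu_{\tau,b}((t+r,1])$, and $(t+r,1] \subseteq (t,1]_r$ only in the wrong direction. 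The correct matching is: $d_\tau(e_{t+r}(a)) = \mu_{\tau,a}((t+r,1])$, and since $(t+r,1]$ is ``inside'' the $r$-neighbourhood of $(t,1]$ the comparison $\mu_{\tau,a}(((t,1])_r) \ge \mu_{\tau,b}((t,1])$ is not quite what I want either. Let me instead use: $\mu_{\tau,b}((t+r,1]) \le \mu_{\tau,b}(((t+2r, 1])_r)$ hmm. The cleanest route, which I expect is what the cited Lemmas 1--2 of \cite{Cheong:2013aa} do, is to note $((t+r,1])_r \supseteq (t, 1]$ fails, but $(\,(\varepsilon, 1]\,)_r \supseteq (\varepsilon - r, 1] \cap (0,1]$, so applying the $d_P$ condition with $U = (t, 1]$ gives $\mu_{\tau,a}(U_r) \ge \mu_{\tau,b}(U)$ where $U_r \cap (0,1] = (t - r, 1] \cap (0,1]$; this yields $d_\tau(e_{\max(t-r,0)}(a)) \ge d_\tau(e_t(b))$, i.e.\ after relabelling $t \mapsto t + r$, $d_\tau(e_t(a)) \ge d_\tau(e_{t+r}(b))$ for all $t > 0$. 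Then strict comparison (for positive elements generating no projections, which holds here since $\sigma(a) = \sigma(b) = [0,1]$ and the algebra is simple with strict comparison) upgrades the inequality of dimension functions $d_\tau(e_{t+r}(b)) \le d_\tau(e_t(a))$ for all $\tau$ to the Cuntz subequivalence $e_{t+r}(b) \precsim e_t(a)$; symmetrically $e_{t+r}(a) \precsim e_t(b)$, giving $d_W(a,b) \le r$ and hence $d_W(a,b) \le d_P(a,b)$.

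For the second inequality $d_P(a,b) \le d_U(a,b)$, I would argue directly from the definition of $d_U$: suppose $\|uau^* - b\| < r$ for some unitary $u \in \mathcal{U}(\tilde A)$. Conjugation by $u$ preserves every trace and carries the spectral measure of $a$ to that of $uau^*$, so $\mu_{\tau, a} = \mu_{\tau, uau^*}$; thus it suffices to show that if $\|a' - b\| < r$ then $\mu_{\tau,a'}(U_r) \ge \mu_{\tau,b}(U)$ for every open $U \subset (0,1]$. This is a standard perturbation fact: if $\|a' - b\| < r$ then for any open $U$ we have $\chi_U(b) \precsim$ (a spectral projection of $a'$ over $U_r$), concretely because $f(b) \precsim g(a')$ whenever $g \equiv 1$ on the $r$-neighbourhood of $\supp f$ (this follows from Rørdam's standard comparison lemma, e.g.\ via $e_r(f(b)) \precsim$ something, or directly: $\|f(b) - f(a')\|$ is small if $f$ is Lipschitz, but for indicator-type functions one uses the $\precsim$-version). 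Taking suprema over $f$ supported in $U$ gives $\mu_{\tau,b}(U) = \sup_f d_\tau(f(b)) \le d_\tau(g(a')) \le \mu_{\tau,a'}(U_r)$. The symmetric inequality is identical. Hence $d_P(a,b) \le \|uau^* - b\|$ for every $u$, so $d_P(a,b) \le d_U(a,b)$.

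The main obstacle, and the step where I would be most careful, is the first inequality: bridging from ``inequality of dimension functions for all traces'' to ``actual Cuntz subequivalence'', which is exactly where the strict comparison hypothesis is essential (and why the lemma is false without regularity, as the introduction's AH-algebra example shows). One must verify that the elements $e_t(b)$ and $e_t(a)$ are not Cuntz-equivalent to projections---guaranteed by $0$ being a non-isolated point of $\sigma(a) = \sigma(b) = [0,1]$ so that $0 \in \sigma(e_t(a))$ is likewise non-isolated---so that strict comparison applies in the form ``$d_\tau(x) < d_\tau(y)$ for all $\tau$ (or $\le$ with the non-projection caveat) implies $x \precsim y$''. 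A secondary technicality is the bookkeeping with half-open versus open intervals and the $r$-neighbourhood operator $U_r$ (whether one gets $\le$ or $<$, and endpoint effects at $1$), but since $d_P$ is defined as an infimum over $r$ these boundary issues wash out. I would organise the write-up as: (1) reduce $d_P$-comparisons to dimension-function inequalities via $\mu_{\tau,a}(\supp f) = d_\tau(f(a))$; (2) invoke strict comparison to get $d_W$-subequivalences, yielding $d_W \le d_P$; (3) the perturbation argument of the preceding paragraph for $d_P \le d_U$; citing Lemmas 1 and 2 of \cite{Cheong:2013aa} for the details of (1)--(2).
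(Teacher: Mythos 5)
Your proposal is correct and follows essentially the same route as the paper: $d_P\le d_U$ via the perturbation argument of \cite[Lemma 1]{Robert:2010rz}, and $d_W\le d_P$ by converting the measure inequalities (applied to $U=(t,1]$, using $\mu_{\tau,a}((t,1])=d_\tau(e_t(a))$) into dimension-function inequalities and then invoking strict comparison in its non-projection form \cite[Proposition 5.9]{Ara:2009cs}, which applies precisely because $\sigma(a)=\sigma(b)=[0,1]$ ensures no nonzero element of $\cs(a)$ or $\cs(b)$ is Cuntz equivalent to a projection. The endpoint/relabelling bookkeeping you eventually settle on is the correct one, and the remaining details are exactly those the paper delegates to Lemmas 1 and 2 of \cite{Cheong:2013aa}.
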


\begin{proof}
The inequality $d_P\le d_U$ is proved exactly as in \cite[Lemma 1]{Robert:2010rz}, and no assumption on spectra is needed. The condition $\sigma(a)=[0,1]$ ensures that no nonzero element of $\cs(a)$ is Cuntz equivalent to a projection (and similarly for $b$), and then the inequality $d_W\le d_P$ is readily obtained by appealing to a version of strict comparison not available to projections, namely: $x\precsim y$ if and only if $d_\tau(x) \le d_\tau(y)$ for every $\tau$ (see \cite[Proposition 5.9]{Ara:2009cs}).
\end{proof}

Combining Theorem~\ref{main} and Lemma~\ref{levy}, we have the following.
\begin{theorem}
Let $A$ be unital, simple, separable, stably finite, $\mathcal{Z}$-stable and exact, and let $a,b\in A^+$ with $\sigma(a)=\sigma(b)=[0,1]$. Then
\[
d_U(a,b) = d_P(a,b) = d_W(a,b).
\]
\end{theorem}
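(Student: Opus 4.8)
The plan is to simply concatenate the two inequalities already in hand. By Lemma~\ref{levy}, the hypotheses on $A$ (unital, simple, exact, stably finite, and $\mathcal{Z}$-stable---hence of strict comparison---together with the spectral condition $\sigma(a)=\sigma(b)=[0,1]$) give
\[
d_W(a,b) \le d_P(a,b) \le d_U(a,b).
\]
On the other hand, to invoke Theorem~\ref{main} I would first observe that the stated hypotheses are enough to guarantee stable rank one: this is the content of \cite[Theorem 6.7]{Rordam:2004kq}, which says that a simple, unital, stably finite, $\mathcal{Z}$-stable $\cs$-algebra automatically has stable rank one. Thus $A$ meets all the hypotheses of Theorem~\ref{main}, and since $\sigma(a)=\sigma(b)=[0,1]$, we conclude $d_W(a,b)=d_U(a,b)$.

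Combining the two displays forces equality throughout: $d_U(a,b) \le d_W(a,b) = d_U(a,b)$ squeezes $d_P(a,b)$ between $d_W(a,b)$ and $d_U(a,b)$, which now coincide, so all three agree.

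There is essentially no obstacle here, since both of the input results have already been established; the only point requiring a word of care is the passage from ``$\mathcal{Z}$-stable and stably finite'' to ``stable rank one,'' which is needed to apply Theorem~\ref{main} but is standard. (Strictly speaking one could instead cite the first item of the preceding remark, where the role of stable rank one in Theorem~\ref{main} is already discussed in the unital case.) So the proof is a two-line appeal to Theorem~\ref{main} and Lemma~\ref{levy}.
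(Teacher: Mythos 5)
Your proposal is correct and is exactly the paper's argument: the theorem is stated there with the one-line justification ``Combining Theorem~\ref{main} and Lemma~\ref{levy},'' and your additional observation that stable rank one follows from \cite[Theorem 6.7]{Rordam:2004kq} is precisely the point the authors address in item (i) of the remark following Theorem~\ref{main}. No issues.
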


In particular, $a$ and $b$ are approximately unitarily equivalent if and only if $d_\tau(f(a)) = d_\tau(f(b))$ for every $\tau\in T(A)$ and $f\in C_0(0,1]^+$. This has been obtained without the assumption of simplicity in \cite[Theorem 1.3]{Robert:2013aa}.

\section{Outlook} \label{outlook}

Here are two avenues open for continued study.

\begin{enumerate}[(i)]
\item What can be said of \emph{normal} operators in a simple, $\mathcal{Z}$-stable $\cs$-algebra of stable rank one? Or in other words, full spectrum positive contractions represent injective $^*$-homomorphisms from $C_0(0,1]$. How far can one extend our results from $[0,1]$ to compact metric spaces $X$ (compared for example to \cite{Dadarlat:1995uq} and \cite{Skoufranis:2013aa})? In light of what happens in the world of semifinite factors, one would expect, at best, inequalities.
\item Extending another way, a positive contraction in $A$ represents an `order zero' map from $\mathbb{C}$ to $A$. What can be said about order zero maps from, for example, finite dimensional algebras? This question will be addressed in subsequent work.
\end{enumerate}

\section*{Acknowledgements} We are indebted to Michael Cheong, Caleb Eckhardt, Leonel Robert and Stuart White for many helpful discussions and contributions. The second author would like to thank the Fields Institute for support during the 2014 Thematic Program on Abstract Harmonic Analysis, Banach and Operator Algebras, where much of the work was carried out. The third author would like to acknowledge the National Science Foundation research grant DMS-1302763.

\end{document}